\definecolor{darkgreen}{rgb}{0,0.6,0}
\newtheorem{definition}{Definition}
\newtheorem{lemma}{Lemma}
\newtheorem{thm}{Theorem}
\newtheorem{Bem}{Remark}
\def\eps{\varepsilon}
\newcommand{\norm}[1]{\left\lVert#1\right\rVert}
\newcommand{\RF}{K}
\newcommand{\R}{\mathbb{R}}
\newcommand{\N}{\mathbb{N}}
\newcommand{\K}{\mathcal{K}}
\newcommand{\U}{\mathcal{U}}
\newcommand{\Ltwo}[1]{L^2(\Omega,\mathcal{F}_k,\mathbb{P},#1)}
\newcommand{\Normal}{\mathcal{N}}
\newcommand{\F}{\mathcal{F}}
\newcommand{\Filtr}{(\F_k)_{k \in \N_0}}
\newcommand{\Prob}{\mathbb{P}}
\newcommand{\E}{\mathbb{E}}
\newcommand{\Exp}[1]{\mathbb{E}\left[#1\right]}
\newcommand{\xeq}{x^s}
\newcommand{\ueq}{u^s}
\newcommand{\Xstat}{X^s}
\newcommand{\Ustat}{U^s}
\newcommand{\Xstatopt}{X^s_*}
\newcommand{\Ustatopt}{U^s_*}
\title{\LARGE \bf
Pathwise turnpike and dissipativity results for discrete-time stochastic linear-quadratic optimal control problems
}
\author{Jonas Schießl, Ruchuan Ou, Timm Faulwasser, Michael Heinrich Baumann, and Lars Grüne
\thanks{
The authors gratefully acknowledge that this work was funded by the Deutsche Forschungsgemeinschaft (DFG, German Research Foundation) – project number 499435839.}
\thanks{Jonas Schießl, Michael Heinrich Baumann and Lars Grüne are with Mathematical Institute, University of Bayreuth, Germany,
        \tt\small \{jonas.schiessl,michael.baumann,lars.gruene\} @uni-bayreuth.de}%
\thanks{Timm Faulwasser and Ruchuan Ou are with Institute for Energy Systems, Energy Efficiency and Energy Economics, TU Dortmund University, Germany,
        \tt\small \{timm.faulwasser,ruchuan.ou\}@tu-dortmund.de}%
}
\begin{document}

\maketitle
\thispagestyle{empty}
\pagestyle{empty}

\begin{abstract}
    We investigate pathwise turnpike behavior of discrete-time stochastic linear-quadratic optimal control problems. Our analysis is based on a novel strict dissipativity notion for such problems, in which a stationary stochastic process replaces the optimal steady state of the deterministic setting.
    The analytical findings are illustrated by a numerical example.
\end{abstract}

\section{INTRODUCTION}

Turnpike properties are a valuable feature of optimal control problems (OCPs). 
In deterministic problems, the most common variant of this phenomenon describes that for increasing horizon lengths, the optimal trajectories spend most of their time near an optimal steady state. 
This means that the turnpike property can be seen as a finite-horizon variant of the usual asymptotic stability property. It was first observed by von Neumann and Ramsey, see \cite{Neumann1945, Ramsey1928}, but it is still a topic of recent research, particularly due to its relation to model predictive control, see \cite{Faulwasser2022, Gruene2017}.
A closely related concept is strict dissipativity. For deterministic problems the connection between strict dissipativity and the turnpike property is well studied, see \cite{Faulwasser2017, Gruene2016, Gruene2018}.

While turnpike phenomena are rather well understood in the deterministic setting, more theoretical work is needed when uncertainties enter the dynamics and the setting becomes stochastic. 
Some of the challenges here are the definition of the stochastic counterpart to the optimal steady state and the question of which objects we should work within the theoretical analysis, given that in numerical experiments one can observe turnpike properties concerning different objects like distributions, moments, or sample paths of the stochastic system, see \cite{Ou2021}.

The contribution of this paper is twofold: On the one hand we show a pathwise turnpike phenomenon in probability for stochastic systems in which the individual paths do not converge to a steady state but are subject to continued excitation by an additive disturbance. 
As we will see in Theorem~\ref{thm_TurnpikeProb}, this pathwise turnpike behavior states that there is a specific stationary process such that with high probability the paths of near-optimal processes stay close to corresponding paths of the stationary process, except for a number of time instances independent of the optimization horizon.
This distinguishes our results from existing ones, e.g., in \cite{Sun2022, Marimon1989, Kolokoltsov2012}, which either consider turnpike phenomena in distribution rather than pathwise or assume that the stochastic system is $L^2$-stabilizable, in contrast to our setting. On the other hand, we provide the---to the best of the authors' knowledge---first dissipativity-based analysis of a pathwise turnpike property for stochastic optimal control problems.

We investigate this connection in the discrete-time linear-quadratic setting. We note that our strict dissipativity notion differs from the one recently introduced in \cite{Gros2022}, because our notion cannot be reformulated in the sense of the underlying probability measures. In fact, a dissipativity notion in the sense of probability measures would not be strong enough to conclude the pathwise turnpike property in probability that we achieve in this paper, cf. Remark \ref{rem_StatProbMeasure} and Section \ref{sec_DissiAndTurnppike}.

The remainder of the paper is structured as follows: Section \ref{sec_setting} introduces the considered problem formulation and recalls the concepts of turnpike and dissipativity in the deterministic setting.
Section \ref{sec_stationary} shows that a pair of stationary stochastic processes fulfilling a suitable optimality criterion can replace the deterministic optimal steady state. Moreover, we show in Section \ref{sec_DissiAndTurnppike} that a time-varying dissipativity notion implies a pathwise turnpike property for this stationary pair. We also present an illustrative example in Section \ref{sec_example} and summarize our results in Section \ref{sec_summary}.

\section{SETTING AND PRELIMINARIES} \label{sec_setting}
Before investigating stochastic turnpike properties, we introduce our problem set-up and briefly recall the basic concepts of dissipativity and turnpike in the deterministic case.
\subsection{Problem formulation}
For a stabilizable pair $(A, B)$, we consider linear stochastic systems of the form 
\begin{equation} \label{eq_StochSystem}
    X(k+1) = AX(k) + BU(k) + W(k), \quad X(0) = X_0
\end{equation}
where at each time step $k \in \N_0$,  $X(k) \in \Ltwo{\R^n}$, $U(k) \in \Ltwo{\R^l}$, and $W(k) \in L^2(\Omega,\mathcal{F},\mathbb{P},\R^n)$. Here $\Omega$ is the set of realizations, $\Prob$ is the probability measure, $\F$ is a $\sigma$-algebra, and $\Filtr$ is a filtration following the usual hypotheses of \cite{Protter2005}.
For our purpose, we choose $\Filtr$ as the smallest filtration such that $X$ is an adapted process, i.e. 
\begin{equation*}
        \F_k = \sigma(X(0),\ldots,X(k)), \quad k \in \N_0.
\end{equation*}
This choice of the stochastic filtration induces a causality requirement, which ensures that the control action $U(k)$ at time $k$ only depends on the sequence of past disturbances $\lbrace W(k) \rbrace_{k=0,\ldots,k-1}$ and not on future events.
We refer to \cite{Fristedt1997, Protter2005} for more details on stochastic filtrations.\\
Further, for all $k \in \N_0$ we assume that $W(k) \sim \Normal(0, \Sigma_W)$ are \textit{i.i.d} Gaussian random variables which are independent of $X(k)$ and $U(k)$ and which have zero mean and covariance matrix $\Sigma_W \in \R^{n \times n}$.
For a given initial value $X_0 \sim \Normal(\mu_0,\Sigma_0)$ and control $U(\cdot)$ we denote the solution of system \eqref{eq_StochSystem} by $X_{U}(\cdot,X_0)$, or short by $X(\cdot)$ if the initial value and the control are unambiguous. 
Note, that the solution $X_{U}(\cdot,X_0)$ also depends on the disturbance $W(\cdot)$. However, for the sake of readability, we do not highlight this in our notation. \\
For two matrices $R \in \R^{l \times l}$, $Q \in \R^{n \times n}$ with $R > 0$, $Q \geq 0$ and $(A,Q^{1/2})$ detectable, the stage cost is given by 
\begin{equation} \label{eq_StageCost}
    \ell(X,U) := \Exp{\norm{X}_Q^2 + \norm{U}_R^2}.
\end{equation}
The stochastic optimal control problem (stochastic OCP) under consideration is
\begin{equation} \label{eq_sOCP}
    \min_{U(\cdot)} J_N(X_0,U) := \sum_{k=0}^{N-1} \ell(X(k),U(k))
\end{equation}
subject to \eqref{eq_StochSystem}.
The solution of problem \eqref{eq_sOCP} is well known, see \cite{CainesMayne1970}, and given by $U^*(k) = \RF_N(k) X_{U^*}(k,X_0)$ where 
\begin{equation*}
    \RF_N(k) := -\left[R + B^T P_N(k+1) B\right]^{-1} B^T P_N(k+1) A
\end{equation*} 
and $P_N(k)$ is the solution of the backward Riccati iteration (or Riccati difference equation)
\begin{equation} \label{eq_RiccatiIteration}
    \begin{split}
        P_N(k) =& A^T P_N(k+1) A + Q - A^T P_N(k+1)B \\ 
        &\times [ R + B^T P_N(k+1) B ]^{-1} B^T P_N(k+1)A
    \end{split}
\end{equation}
with the terminal condition $P_N(N) = 0$.

\subsection{Deterministic Dissipativity and Turnpike}
Let us denote the deterministic counterpart to system \eqref{eq_StochSystem} by
\begin{equation}
    x(k+1) = Ax(k) + Bu(k), \quad x(0) = x_0
\end{equation}
and the corresponding solution analogously by $x_{u}(\cdot,x_0)$. 
It is easy to see that $(\xeq,\ueq) = (0,0)$ solves the optimization problem 
\begin{equation} \label{eq_optStSt_det}
    \min_{x,u} \ell(x,u) = \norm{x}_Q^2 + \norm{u}_R^2 \quad \text{s.t.}~x=Ax+Bu.
\end{equation}
Therefore, we call $(\xeq,\ueq) = (0,0)$ the optimal steady state of the deterministic linear-quadratic optimal control problem. Next, we formalize the strict dissipativity property for the deterministic problem.
\begin{definition} \label{defn_DissiDeterm}
    The deterministic linear-quadratic optimal control problem is called \textit{strictly  $(x,u)$-dissipative} at $(\xeq,\ueq)$ if there exists a storage function $\lambda:~\R^n \rightarrow \R$ bounded from below and a function $\rho \in \K_{\infty}$, i.e., 
    $\rho: \R_0^+ \rightarrow \R_0^+$ continuous, strictly increasing and unbounded with $\rho(0)=0$,
    such that 
    \begin{equation}\label{eq:sDI}
        \begin{split}
            &\ell(x,u) - \ell(\xeq,\ueq) + \lambda(x) - \lambda(Ax+Bu) \\
            &\geq \rho(\norm{(x-\xeq,u-\ueq)})
        \end{split}
    \end{equation}
    holds for all $(x,u) \in \R^n \times \R^l$.  
\end{definition}

The following lemma, a slight modification of \cite[Theorem 5.3]{Gruene2013}, shows that strict dissipativity implies turnpike-like behavior for the deterministic case.
\begin{lemma} \label{lem_TurnpikeDeterm}
    Assume strict $(x,u)$-dissipativity at $(\xeq,\ueq)$. Then for each $x_0 \in \R^n$ there exists a constant $C \in \R$ such that for each $\delta > 0$, each control sequence $u(\cdot)$
    satisfying $J_N(x_0,u) \leq N\ell(\xeq,\ueq) + \delta$ and each $\eps > 0$ the value $Q_{\eps} := \# \lbrace k \in \lbrace 0,\ldots,N-1 \rbrace \mid \norm{(x_{u}(k,x_0) - \xeq, u(k) - \ueq)} \leq \eps \rbrace$ satisfies the inequality $Q_{\eps} \geq N- (\delta+C)/\rho(\eps)$. 
\end{lemma}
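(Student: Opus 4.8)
The plan is to sum the strict dissipativity inequality \eqref{eq:sDI} along the feasible trajectory and exploit the telescoping of the storage-function terms, which is the standard route from dissipativity to a turnpike bound. First I would fix $x_0 \in \R^n$ and an arbitrary control sequence $u(\cdot)$ with associated trajectory $x(k) := x_{u}(k,x_0)$. Evaluating \eqref{eq:sDI} at each pair $(x(k),u(k))$ and using $x(k+1) = Ax(k) + Bu(k)$, I would sum over $k = 0, \ldots, N-1$. The storage terms $\lambda(x(k)) - \lambda(x(k+1))$ telescope to $\lambda(x(0)) - \lambda(x(N))$, while the stage-cost terms sum to $J_N(x_0,u) - N\ell(\xeq,\ueq)$, yielding
\[
J_N(x_0,u) - N\ell(\xeq,\ueq) + \lambda(x_0) - \lambda(x(N)) \geq \sum_{k=0}^{N-1}\rho\!\left(\norm{(x(k)-\xeq,\,u(k)-\ueq)}\right).
\]

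Next I would invoke the two hypotheses on $u(\cdot)$. The feasibility bound $J_N(x_0,u) \leq N\ell(\xeq,\ueq) + \delta$ controls the cost difference by $\delta$, and since $\lambda$ is bounded from below, say by $M \in \R$, the terminal term satisfies $-\lambda(x(N)) \leq -M$. Setting $C := \lambda(x_0) - M$ — a quantity depending only on $x_0$ and the storage function, hence independent of $N$, $\delta$, $\eps$ and $u(\cdot)$ — I would arrive at
\[
\sum_{k=0}^{N-1}\rho\!\left(\norm{(x(k)-\xeq,\,u(k)-\ueq)}\right) \leq \delta + C.
\]
I would emphasize that discarding the terminal term by means of the lower bound on $\lambda$ is precisely what renders $C$ horizon- and control-independent, which is the essential point of the claimed estimate.

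Finally I would convert this integrated bound into a counting estimate. Let $\mathcal{B} := \{k \in \{0,\ldots,N-1\} \mid \norm{(x(k)-\xeq,\,u(k)-\ueq)} > \eps\}$ be the set of off-turnpike indices, so that $\#\mathcal{B} = N - Q_{\eps}$. Because $\rho \in \K_{\infty}$ is strictly increasing, every $k \in \mathcal{B}$ contributes at least $\rho(\eps) > 0$ to the sum, whence $(N - Q_{\eps})\,\rho(\eps) \leq \delta + C$. Dividing by $\rho(\eps)$ and rearranging gives $Q_{\eps} \geq N - (\delta + C)/\rho(\eps)$, as asserted.

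The argument is largely routine once the summation is arranged; the only step requiring genuine care is confirming that $C$ can be chosen uniformly in $N$, $\delta$, $\eps$ and the control $u(\cdot)$, and this is guaranteed exactly by the boundedness-from-below of the storage function $\lambda$ assumed in Definition \ref{defn_DissiDeterm}.
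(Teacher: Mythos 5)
Your proof is correct and follows exactly the standard argument the paper relies on: it is the same summation-telescoping-counting scheme from \cite[Theorem 5.3]{Gruene2013} that the paper cites for this lemma and reproduces almost verbatim in the proof of Theorem \ref{thm_StochTurnpike}, including the choice $C = \lambda(x_0) - M$ with $M$ a lower bound on the storage function. No gaps.
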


We note that this result holds for much more general optimal control problems than specified above. In the setting from above, we have $\ell(\xeq,\ueq) = 0$ and thus we will see the turnpike phenomenon for all solutions $x_{u}(k,x_0)$ for which $J_N(x_0,u)$ is uniformly bounded in $N$.  We note that although in the sequel we do not change the structure of the cost function, we will not have that the cost vanishes in the stochastic counterpart of the turnpike steady state $(\xeq,\ueq)$ that we identify below.

\section{STATIONARY SOLUTIONS AND OVERTAKING OPTIMALITY} \label{sec_stationary}

Our aim is to transfer the results from the deterministic to the stochastic setting. For that, we have to consider how we can receive an equivalent formulation of \eqref{eq_optStSt_det} to characterize the optimal steady state for system \eqref{eq_StochSystem}. 
We note that, for any $U \in \Ltwo{\R^l}$, the condition
\begin{equation}
    X = AX + BU + W
\end{equation}
cannot be satisfied as $U$ may not depend on $W$ according to the underlying filtration. Thus, a random variable, which is constant in time, is not a suitable candidate for the stochastic counterpart of the optimal steady state. 
However, we can keep the distribution of such a process constant, instead,  
and use the following definition of stationary processes. 
\begin{definition} \label{defn_StationaryPair}
    A pair of state and control processes $(\Xstat(\cdot),\Ustat(\cdot))$ is called stationary for system~\eqref{eq_StochSystem} if there is a stationary distribution $\varrho^s$ such that
    \begin{equation*}
        \Xstat(k) \sim \varrho^s \quad \mbox{and} \quad \Xstat(k+1) = A\Xstat(k) + B\Ustat(k) + W(k) 
    \end{equation*}
    for all $k \in \N_0$.
\end{definition}
\begin{Bem}[Stationarity in probability measures]~\\ \label{rem_StatProbMeasure}
The probability measure defining a stationary distribution is called an invariant measure, see \cite{Meyn1993}.
Hence, an alternative approach to the above stationarity concept could be to switch entirely to the set of underlying probability measures and to conduct the analysis there, see \cite{Gros2022}.
However, this approach is limited in that  all information about the single realization paths of the solutions is lost -- although numerical simulations suggest a turnpike phenomenon for the paths, as well, see \cite{Ou2021}. 
\end{Bem}

As shown in \cite{Meyn1989}, every control of the form $U(k) := LX(k)$ with $A + BL$ Schur-stable yields an invariant probability measure and, therefore, such a stationary pair. 
So we need an additional optimality criterion to characterize  optimal stationary pairs along with the optimal $L$. Since we aim at formulating this criterion regarding the stochastic processes, which are not constant over time, we cannot minimize the stage cost \eqref{eq_StageCost} for one single time step. Instead, we must include the entire evolution of $X(\cdot)$ and $U(\cdot)$ in the optimization process. 
For this purpose, let us first derive a reformulation of the cost.
\begin{lemma} \label{lem_modifiedCost}
    Let $P$ be the unique positive semidefinite solution of the discrete-time algebraic Riccati equation 
    \begin{equation} \label{eq_algRiccati}
        P = A^T P A + Q - A^T P B [ R + B^T P B ]^{-1} B^T P A,
    \end{equation}
    and set $\RF := -\left[R + B^T P B\right]^{-1} B^T P A$. Then for every $N \in \N$ the cost \eqref{eq_sOCP} can be rewritten as 
    \begin{equation*}
        \begin{split}
            J_N(X_0,U) =& \sum_{k=0}^{N-1} \Exp{\norm{U(k) - \RF X(k)}_{\widetilde{R}}^2} + \Exp{\norm{X_0}_P^2} \\
            &- \Exp{\norm{X(N)}_P^2} + \sum_{k=0}^{N-1} \Exp{\norm{W(k)}_P^2}
        \end{split}
    \end{equation*}
    with $\widetilde{R} := R+B^TPB$ symmetric and positive definite. 
\end{lemma}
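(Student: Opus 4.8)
The plan is to verify the identity by a completion-of-squares argument coupled with a telescoping sum, exploiting the algebraic Riccati equation \eqref{eq_algRiccati} and the stochastic independence structure. The natural quantity to telescope is the ``value-function-like'' term $\Exp{\norm{X(k)}_P^2}$, so I would first compute, for a single step, the difference
\begin{equation*}
    \Exp{\norm{X(k)}_P^2} - \Exp{\norm{X(k+1)}_P^2}
\end{equation*}
using the dynamics \eqref{eq_StochSystem}, $X(k+1) = AX(k)+BU(k)+W(k)$. Expanding $\norm{AX(k)+BU(k)+W(k)}_P^2$ and taking expectations, the cross terms involving $W(k)$ vanish because $W(k)$ has zero mean and is independent of $X(k)$ and $U(k)$; this is the step where the probabilistic assumptions enter. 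What survives is $\Exp{\norm{AX(k)+BU(k)}_P^2}$ plus $\Exp{\norm{W(k)}_P^2}$, so the per-step difference splits cleanly into a deterministic-looking quadratic form in $(X(k),U(k))$ and the noise contribution.

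Next I would manipulate the purely quadratic part $\Exp{\norm{X(k)}_P^2} - \Exp{\norm{AX(k)+BU(k)}_P^2}$ into the stage cost plus a perfect square. This is exactly the standard completion-of-squares underlying the Riccati equation: writing out $\norm{X}_P^2 - \norm{AX+BU}_P^2$ and adding and subtracting $\norm{X}_Q^2+\norm{U}_R^2$, the Riccati identity \eqref{eq_algRiccati} together with the definition of $\RF$ and $\widetilde R = R + B^TPB$ should collapse the remainder precisely into $-\norm{U-\RF X}_{\widetilde R}^2$. Concretely, I expect to show, for every $(X,U)$,
\begin{equation*}
    \norm{X}_Q^2 + \norm{U}_R^2 = \norm{X}_P^2 - \norm{AX+BU}_P^2 + \norm{U - \RF X}_{\widetilde R}^2,
\end{equation*}
which is a deterministic algebraic identity; taking expectations then gives the per-step relation $\ell(X(k),U(k)) = \Exp{\norm{X(k)}_P^2} - \Exp{\norm{AX(k)+BU(k)}_P^2} + \Exp{\norm{U(k)-\RF X(k)}_{\widetilde R}^2}$.

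Finally I would sum the per-step identity over $k=0,\dots,N-1$. Combining the two preceding steps, each term $\Exp{\norm{AX(k)+BU(k)}_P^2}$ is replaced via the noise decomposition by $\Exp{\norm{X(k+1)}_P^2} - \Exp{\norm{W(k)}_P^2}$, so that
\begin{equation*}
    \ell(X(k),U(k)) = \Exp{\norm{X(k)}_P^2} - \Exp{\norm{X(k+1)}_P^2} + \Exp{\norm{U(k)-\RF X(k)}_{\widetilde R}^2} + \Exp{\norm{W(k)}_P^2}.
\end{equation*}
The sum of the first difference telescopes to $\Exp{\norm{X_0}_P^2} - \Exp{\norm{X(N)}_P^2}$, while the remaining two terms accumulate into the two sums in the claimed formula, yielding $J_N(X_0,U)$ exactly as stated. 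Existence, positive definiteness, and uniqueness of $P$ are guaranteed by $R>0$, $Q\geq 0$, stabilizability of $(A,B)$ and detectability of $(A,Q^{1/2})$, so $\widetilde R > 0$ is immediate. The main obstacle is the algebraic completion-of-squares identity: one must verify carefully that the Riccati equation and the definition of $\RF$ make the cross and quadratic terms cancel to leave exactly $\norm{U-\RF X}_{\widetilde R}^2$ with no residual; this is routine but error-prone bookkeeping, and it is where the whole reformulation stands or falls.
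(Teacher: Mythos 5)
Your proposal is correct and follows essentially the same route as the paper: the paper's proof simply observes that $P$ is the constant solution of the Riccati iteration with terminal condition $P_N(N)=P$ and delegates the completion-of-squares and telescoping computation to \cite[Lemma 7.1]{Aastroem1970}, invoking the independence of $W(k)$ from $X(k)$ and $U(k)$ to kill the cross terms --- exactly the two ingredients you identify and carry out explicitly. Your pointwise identity $\norm{X}_Q^2+\norm{U}_R^2 = \norm{X}_P^2 - \norm{AX+BU}_P^2 + \norm{U-\RF X}_{\widetilde R}^2$ does check out against \eqref{eq_algRiccati}, so your write-up is a self-contained version of the cited argument.
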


\begin{proof}
    Since $P$ is the constant solution of the Riccati iteration \eqref{eq_RiccatiIteration} with terminal condition $P_N(N)=P$, this lemma is a direct consequence of \cite[Chapter~8, Lemma~6.1]{Aastroem1970} and the stochastic independence of $X(k)$ and $W(k)$ as well as $U(k)$ and $W(k)$.
\end{proof}
Lemma \ref{lem_modifiedCost} reveals that for every control $U(\cdot)$, the cost grows unbounded if we let $N$ go to infinity. That means we cannot directly solve the infinite horizon stochastic OCP. Thus, we use the following concept of \textit{overtaking optimality}, originally introduced by \cite{Atsumi1965,vonWeizsaecker1965}. Moreover, it was already used to analyze turnpike behavior for time-varying discrete-time optimal control problems in deterministic settings, see \cite{Gruene2018a}. 
\begin{definition}
    Let $X_0$ be a fixed initial condition. Then a control $U_1(\cdot)$ overtakes $U_2(\cdot)$ if there is a time $N_0$ such that
    \begin{equation*}
        J_N(X_0,U_1) < J_N(X_0,U_2), \quad \text{for all } N>N_0.
    \end{equation*}
    For a set of controls $\U$, a control $U^*(\cdot)\in\U$ is called overtaking optimal on $\U$
    if it overtakes every other control $U(\cdot) \in \U$. 
\end{definition}

Note that this definition implies the uniqueness of the overtaking optimal control if it exists.
For the continuous-time case, it has been shown in \cite{Leizarowitz1987} that the feedback law obtained from the solution of the algebraic Riccati equation defines an overtaking optimal control. We adapt this proof and show that the claim also holds for the discrete-time case. 
\begin{thm} \label{thm_overtaking optimal}
    For every initial condition $X_0 \sim \mathcal{N}(\mu_0,\Sigma_0)$ there exists a unique overtaking optimal control $U^*(\cdot)$ on the set
    \begin{equation*}
        \begin{split}
            \U = \lbrace U(\cdot) \mid~ &\exists \alpha>0:~\Exp{\Vert X_{U}(k,X_0) \Vert^2} \leq \alpha \mbox{ and} \\
            &U(k) \in \Ltwo{\R^l} \mbox{ for all } 
            k\in\N_0 \rbrace
        \end{split}
    \end{equation*}
    given by the feedback law 
    \begin{equation} \label{eq: optimal_U}
        U^*(k) = \RF X^*(k), \quad X^*(\cdot) := X_{U^*}(\cdot,X_0)
    \end{equation}
    where $\RF := -\left[R + B^T P B\right]^{-1} B^T P A$ and $P$ is the positive semidefinite symmetric solution of the discrete-time algebraic Riccati equation \eqref{eq_algRiccati}.
    Further, the following stronger version of overtaking optimality holds: For every $U(\cdot) \in \U$ with $U(\cdot) \neq U^*(\cdot)$ there is a time $N_0$ and an $\varepsilon > 0$ such that 
    \begin{equation*}
        J_N(X_0, U^*) + \varepsilon < J_N(X_0, U)
    \end{equation*}
    for all $N \geq N_0$.
\end{thm}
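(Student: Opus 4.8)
The plan is to reduce everything to the cost reformulation of Lemma~\ref{lem_modifiedCost}, which isolates exactly the control-dependent part of the objective. Writing $V(k) := U(k) - \RF X(k)$ for the deviation of an admissible control from the optimal feedback applied to its own trajectory, Lemma~\ref{lem_modifiedCost} reads
\[
J_N(X_0,U) = \sum_{k=0}^{N-1}\Exp{\norm{V(k)}_{\widetilde{R}}^2} + \Exp{\norm{X_0}_P^2} - \Exp{\norm{X(N)}_P^2} + \sum_{k=0}^{N-1}\Exp{\norm{W(k)}_P^2}.
\]
For the candidate $U^*(\cdot)$ the deviation terms vanish identically, and since $\Exp{\norm{X_0}_P^2}$ and $\sum_{k}\Exp{\norm{W(k)}_P^2}$ do not depend on the control, they cancel in the difference, leaving
\[
J_N(X_0,U) - J_N(X_0,U^*) = \sum_{k=0}^{N-1}\Exp{\norm{V(k)}_{\widetilde{R}}^2} - \Exp{\norm{X(N)}_P^2} + \Exp{\norm{X^*(N)}_P^2}.
\]
The entire theorem then follows from analyzing the right-hand side as $N\to\infty$.

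First I would record the boundedness facts. Since $\RF$ is the stabilizing LQR gain associated with the positive definite solution $P$ of \eqref{eq_algRiccati}, the matrix $A+B\RF$ is Schur stable; hence the closed loop $X^*(k+1)=(A+B\RF)X^*(k)+W(k)$ has uniformly bounded second moments, which simultaneously shows $U^*(\cdot)\in\U$ and that $\Exp{\norm{X^*(N)}_P^2}$ stays bounded in $N$. For any competitor $U(\cdot)\in\U$ the defining bound $\Exp{\norm{X_U(k,X_0)}^2}\le\alpha$ keeps $\Exp{\norm{X(N)}_P^2}$ bounded as well. Because $\widetilde{R}>0$, the deviation sum is monotone nondecreasing in $N$ and therefore either diverges to $+\infty$ or converges to a finite limit $S$.

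In the divergent case the bounded terminal terms cannot compensate, so $J_N(X_0,U)-J_N(X_0,U^*)\to+\infty$ and the strict inequality holds for all large $N$. The delicate case is convergence to finite $S$. Here I would first note that $U(\cdot)\neq U^*(\cdot)$ forces $\Exp{\norm{V(k_0)}^2}>0$ for some $k_0$ — otherwise $U(k)=\RF X(k)$ for all $k$ gives $X(\cdot)=X^*(\cdot)$ and $U(\cdot)=U^*(\cdot)$ by induction on the shared recursion with the common initial value — whence $S>0$. The crux is to show that the terminal-term difference vanishes. Setting $D(k) := X(k)-X^*(k)$, a short computation using $U(k)=V(k)+\RF X(k)$ yields $D(0)=0$ and $D(k+1)=(A+B\RF)D(k)+BV(k)$, so $D(\cdot)$ is the response of a Schur-stable recursion to the input $V(\cdot)$. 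As $S<\infty$ implies $V(k)\to0$ in $L^2$, an input-to-state type estimate gives $\Exp{\norm{D(N)}^2}\to0$; combined with the boundedness of $\Exp{\norm{X^*(N)}^2}$ and Cauchy--Schwarz applied to the cross term $\Exp{\langle X^*(N),D(N)\rangle_P}$ in the expansion $\Exp{\norm{X(N)}_P^2}-\Exp{\norm{X^*(N)}_P^2}=2\Exp{\langle X^*(N),D(N)\rangle_P}+\Exp{\norm{D(N)}_P^2}$, this yields $\Exp{\norm{X(N)}_P^2}-\Exp{\norm{X^*(N)}_P^2}\to0$. Consequently $J_N(X_0,U)-J_N(X_0,U^*)\to S>0$.

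In both cases $\liminf_{N\to\infty}\bigl(J_N(X_0,U)-J_N(X_0,U^*)\bigr)>0$, so there are $N_0$ and $\eps>0$ with $J_N(X_0,U^*)+\eps<J_N(X_0,U)$ for all $N\ge N_0$, which is precisely the stronger statement; the plain overtaking optimality of $U^*(\cdot)$ follows at once, and uniqueness is immediate from the definition of overtaking optimality. I expect the main obstacle to be the finite-$S$ case, specifically deducing the $L^2$-convergence $\Exp{\norm{D(N)}^2}\to0$ from square-summability of the input $V(\cdot)$ together with the Schur property of $A+B\RF$, and then transferring this through the quadratic terminal term; by contrast, the divergent case and the cancellation of the control-independent contributions are routine once the reformulation of Lemma~\ref{lem_modifiedCost} is in hand.
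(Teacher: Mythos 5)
Your proposal is correct and follows essentially the same route as the paper's proof: verify $U^*(\cdot)\in\U$ via Schur stability of $A+B\RF$, reduce to the difference formula from Lemma~\ref{lem_modifiedCost}, split into the divergent and square-summable cases for $V(\cdot)$, and in the latter case show the terminal terms cancel asymptotically by driving the Schur-stable error recursion with the $L^2$-vanishing input and applying Cauchy--Schwarz to the cross term. The only cosmetic difference is that you phrase the key step as an input-to-state estimate for $D(k)=X(k)-X^*(k)$, whereas the paper writes out the explicit convolution $\sum_{j} A_{\RF}^j B V(k-1-j)$; these are the same computation.
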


\begin{proof}
    First we have to verify that $U^*(\cdot)=KX^*(\cdot) \in \U$. Since $U^*(\cdot)$ only depends on the current state, it is clear that the filtration condition is fulfilled and thus $U^*(k) \in \Ltwo{\R^l}$ holds for all $k \in \N_0$. 
    So it remains to show that $\E[\Vert X^*(k) \Vert^2]$ is bounded for all $k \in \N_0$. We can calculate the distribution of $X^*(k) \sim \Normal(\mu(k),\Sigma(k))$ directly by the equations
    \begin{equation} \label{eq_SystemDistribution}
        \begin{split}
            \mu(k+1) &= A_K \mu(k) \\
            \Sigma(k+1) &= A_K \Sigma(k) A_K^T + \Sigma_W 
        \end{split} 
    \end{equation}
    with the initial condition $(\mu(0),\Sigma(0)) = (\mu_0,\Sigma_0)$ and the Schur-stable matrix $A_{\RF} := A + B\RF$. Therefore, we know that $\mu(k)$ and $\Sigma(k)$ converge because of the Schur-stability of $A_{\RF}$ and, thus, we get that
    \begin{equation*}
        \Exp{\norm{X(k)}^2} = \sum_{i=0}^{n} \Exp{X_i(k)^2} = \mu(k)^T \mu(k) + Tr(\Sigma(k))
    \end{equation*}
    converges, too, which especially implies the boundedness of $\E[\Vert X^*(k) \Vert^2]$. 
    
    Now, since we know that $U^*(\cdot) \in \U$, we can write every 
    \begin{equation*}
        U(k) = \RF X(k) + V(k), \quad \mbox{for all} ~ k \in \N_0 
    \end{equation*}
    with $V(\cdot) \in \U$. 
    If we use Lemma \ref{lem_modifiedCost} to calculate the cost of $U(\cdot)$ we get 
    \begin{equation*}
        \begin{split}
            J_N(X_0,U) =& \sum_{k=0}^{N-1} \Exp{\norm{V(k)}_{\widetilde{R}}^2} + \Exp{\norm{X_0}_P^2} \\
            &- \Exp{\norm{X(N)}_P^2} + \sum_{k=0}^{N-1} \Exp{\norm{W(k)}_P^2}
        \end{split}
    \end{equation*}
    and, thus,
    \begin{equation*}
        \begin{split}
            &J_N(X_0,U) - J_N(X_0,U^*) \\
            &= \sum_{k=0}^{N-1} \Exp{\norm{V(k)}_{\widetilde{R}}^2} - \Exp{\norm{X(N)}_P^2} + \Exp{\norm{X^*(N)}_P^2}.
        \end{split}
    \end{equation*}
    We consider two cases to prove that $U^*(\cdot)$ is overtaking optimal. First assume $\lim_{N \rightarrow \infty} \sum_{k=0}^{N-1} \E [ \Vert V(k) \Vert_{\widetilde{R}}^2 ] = \infty$.
    Then we get by the boundedness of $\E[\norm{X(T)}^2]$ and the nonnegativity of $\E[\norm{X^*(T)}_P^2 ]$ that there exists a time $N_0 > 0$ and a constant $\alpha_0 > 0$ such that 
    \begin{equation*}
        J_N(X_0,U) - J_N(X_0,U^*) \geq \sum_{k=0}^{N-1} \Exp{\norm{V(k)}_{\widetilde{R}}^2} - \alpha_0 > \eps
    \end{equation*}
    for all $N > N_0$, i.e.,\ $U^*$ is overtaking optimal. 
    
    Let us now consider the second possibility 
    \begin{equation} \label{eq_Vsummable}
        0 \leq \lim_{N \rightarrow \infty} \sum_{k=0}^{N-1} \Exp{\norm{V(k)}_{\widetilde{R}}^2} =: \gamma < \infty.
    \end{equation}
     First, we assume $\gamma = 0$. Then we can conclude that $\E[\Vert V(k) \Vert_{\tilde{R}}^2] = 0$ for all $k \in \N_0$ and thus $V(\cdot) = 0$ almost surely, which implies $U(\cdot) = U^*(\cdot)$.
    Therefore, we can w.l.o.g. assume that $\gamma > 0$, which implies that there exists a constant $\tilde{\eps} > 0$ and a time $N_1 > 0$ such that 
    \begin{equation}
        \sum_{k=0}^{N-1} \Exp{\norm{V(k)}_{\widetilde{R}}^2} > \tilde{\eps} \quad \text{ for all } N \geq N_1.
    \end{equation}
    Next, we show that 
    \begin{equation} \label{eq_SecondMoment}
        \lim_{N \rightarrow \infty} \left[\Exp{\norm{X(N)}_P^2} - \Exp{\norm{X^*(N)}_P^2}\right] = 0
    \end{equation}
    for which by the equality $x^TPx = tr(Pxx^T)$ it is sufficient to prove that 
    \begin{equation*}
        \lim_{N \rightarrow \infty} \left[\Exp{X(N) X(N)^T} - \Exp{X^*(N) X^*(N)^T}\right] = 0.
    \end{equation*}
    The equations satisfied by $X^*(\cdot)$ and $X(\cdot)$ are
    \begin{align*}
        X^*(k+1) &= A_{\RF} X^*(k) + W(k) \\
        X(k+1) &= A_{\RF} X(k) + BV(k) + W(k)
    \end{align*}
    and thus, the solution $X(\cdot)$ is given by
    \begin{equation} \label{eq_solX}
        \begin{split}
            X(k) &= X^*(k) + \sum_{j=0}^{k-1} A_{\RF}^j B V(k-1-j).
        \end{split}
    \end{equation}
    From equation \eqref{eq_solX} we get
    \begin{equation*}
        \begin{split}
            &\Exp{X(k)X(k)^T} =  \Exp{X^*(k)X^*(k)^T} \\
            &+ \E \Big[ X^*(k) \tilde{V}(k)^T \Big] + \E \Big[ \tilde{V}(k) X^*(k)^T \Big] + \E\Big[ \tilde{V}(k) \tilde{V}(k)^T \Big]
        \end{split}
    \end{equation*}
    with $\tilde{V}(k) := \sum_{j=0}^{k-1} A_{\RF}^j B V(k-1-j)$.
    It follows from the Cauchy-Schwarz inequality that there is a constant $\kappa > 0$ such that any two random variables $Z_1$ and $Z_2$ satisfy
    \begin{equation} \label{eq_CauchySchwarz}
        \norm{\Exp{Z_1 Z_2^T}} \leq \kappa \left(\Exp{\norm{Z_1 }^2}\right)^{1/2} \left(\Exp{\norm{Z_2}^2}\right)^{1/2}.
    \end{equation}
    By inequality \eqref{eq_CauchySchwarz}, it will be sufficient to prove that
    $\E[ \Vert \tilde{V}(k) \Vert^2 ] \rightarrow 0$ as $k \rightarrow \infty$ in order to establish $\E[X(k)X(k)^T - X^*(k)X^*(k)^T] \rightarrow 0$. 
    It holds that
    \begin{equation*}
        \Exp{\big\Vert \tilde{V}(k) \big\Vert^2} \leq \left\Vert B \right\Vert^2 \sum_{j=0}^{k-1}\big\Vert A_{\RF}^j\big\Vert^2 ~\Exp{\big\Vert V(k-1-j) \big\Vert^2}
    \end{equation*}
    which converges to zero for $k \rightarrow \infty$ because $\|A_K^j\|^2$ and $\E[\|V(j)\|^2]$ are summable by Schur-stability of $A_K$ and equation \eqref{eq_Vsummable}, respectively.
    This proves \eqref{eq_SecondMoment}, and thus there exist a time $N_0 \geq N_1$ such that
    \begin{equation*}
        \begin{split}
            &J_N(X_0,U) - J_N(X_0,U^*) \\
            &= \sum_{k=0}^{N-1} \Exp{\norm{V(k)}_{\widetilde{R}}^2} + \Exp{\norm{X^*(N)}_P^2}  - \Exp{\norm{X(N)}_P^2} \\
            &> \tilde{\eps} - \dfrac{\tilde{\eps}}{2} = \dfrac{\tilde{\eps}}{2} := \eps > 0
        \end{split}
    \end{equation*}
    for all $N > N_0$, which concludes the proof. 
\end{proof}
Note that the restriction to the set $\U$ in Theorem \ref{thm_overtaking optimal} is not limiting, since we are only interested in optimal stationary pairs and every control defining a stationary pair must lie in $\U$ to provide an invariant measure.
Next, we show that with a proper choice of $X_0$ the overtaking optimal control process from Theorem \ref{thm_overtaking optimal} defines a stationary pair in the sense of Definition \ref{defn_StationaryPair}.
\begin{lemma} \label{lem_optimalStPair}
    For system \eqref{eq_StochSystem}, there exists a stationary pair with control
    $\Ustatopt(\cdot) = \RF \Xstatopt(\cdot)$, where $\RF$ is the feedback law generating the overtaking optimal control from Theorem \ref{thm_overtaking optimal}, together with the state process 
    \begin{equation*}
        \Xstatopt(k+1) = (A+B \RF) \Xstatopt(k) + W(k)
    \end{equation*}
    and initial distribution $\Xstatopt(0) \sim \mathcal{N}(\mu^s_*, \Sigma^s_*)$. Here $\mu^s_* = 0$ and $\Sigma^s_*$ is the solution of the Lyapunov equation
    \begin{equation*} 
        \Sigma^s_* = (A + B\RF) \Sigma^s_* (A + B\RF)^T + \Sigma_W.
    \end{equation*}
\end{lemma}
\begin{proof}
    Since $A+B \RF$ is Schur-stable, by \cite{Meyn1989} the stationary state process $\Xstatopt(\cdot)$ exists.
    From equation \eqref{eq_SystemDistribution} it is clear that the corresponding invariant measure is given by $\mathcal{N}(\mu^s_*, \Sigma^s_*)$ where $\mu^s_*$ and $\Sigma^s_*$ must satisfy the equations 
    \begin{equation*}
        \mu^s_* = A_{\RF} \mu^s_*, \quad \Sigma^s_* = A_{\RF} \Sigma^s_* A_{\RF}^T + \Sigma_W. \vspace{-0.6cm}
    \end{equation*}
\end{proof}

\section{STOCHASTIC DISSIPATIVITY AND TURNPIKE} \label{sec_DissiAndTurnppike}
As we have characterized the optimal stationary pair for the considered stochastic OCP, we now show that this pair satisfies a stochastic extension of strict dissipativity. 
In the resulting dissipativity inequality, the distance measure on the right-hand side depends on the exact realization of the random variables rather than only on their distributions, cf. \eqref{eq:sDI}.
That means our distance measure is not a dissimilarity measure in the sense of \cite{Gros2022}, and, thus, our concept of stochastic dissipativity differs from that one presented there.
Specifically, we construct a modified cost function and a corresponding storage function in several steps in a series of lemmas, starting with the stage cost from Lemma \ref{lem_modifiedCost}.
\begin{lemma} \label{lem_DissiStrategy}
    For every control $U(\cdot)$ and corresponding state $X(\cdot) = X_{U}(\cdot,X_0)$, the equality
    \begin{equation*}
        \begin{split}
            &\hat{\ell}(X(k),U(k)) := \ell(X(k),U(k)) - \ell(\Xstatopt(k),\Ustatopt(k)) \\ 
            &~~+ \hat{\lambda}(X(k)) - \hat{\lambda}(X(k+1)) = \Exp{\norm{U(k) - \RF X(k)}_{\widetilde{R}}^2}
        \end{split}
    \end{equation*}
    holds with $\hat{\lambda}(X(k)) = -\E[\Vert X(k) \Vert_P^2]$.
\end{lemma}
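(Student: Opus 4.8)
The plan is to obtain the claimed identity by extracting the per-step version of the cost reformulation in Lemma~\ref{lem_modifiedCost} and then identifying the disturbance contribution with the stationary stage cost. The cleanest route avoids re-deriving the completing-the-square computation: since for a fixed admissible control $U(\cdot)$ the trajectory $X(\cdot) = X_{U}(\cdot, X_0)$ does not depend on the horizon, I would simply difference the statement of Lemma~\ref{lem_modifiedCost} at horizons $N+1$ and $N$. On the left this yields $J_{N+1}(X_0,U) - J_N(X_0,U) = \ell(X(N),U(N))$, while on the right all telescoping and summation terms collapse to a single step. Renaming $N$ to $k$, this gives the pointwise identity
\begin{equation*}
    \ell(X(k),U(k)) = \Exp{\norm{U(k) - \RF X(k)}_{\widetilde{R}}^2} + \Exp{\norm{X(k)}_P^2} - \Exp{\norm{X(k+1)}_P^2} + \Exp{\norm{W(k)}_P^2},
\end{equation*}
valid for every admissible pair $(X(\cdot),U(\cdot))$. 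Alternatively one can derive this directly from the algebraic Riccati equation~\eqref{eq_algRiccati}, using $\widetilde{R}\,\RF = -B^T P A$ and $Q = P - A^T P A + \RF^T \widetilde{R}\, \RF$ to complete the square, together with the independence of $W(k)$ from $\F_k$, which makes the cross term $\E[(AX(k)+BU(k))^T P W(k)]$ vanish.

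With $\hat{\lambda}(X(k)) = -\E[\norm{X(k)}_P^2]$ one has $\hat{\lambda}(X(k)) - \hat{\lambda}(X(k+1)) = \Exp{\norm{X(k+1)}_P^2} - \Exp{\norm{X(k)}_P^2}$, so adding this to the pointwise identity cancels the two $P$-weighted second moments and leaves
\begin{equation*}
    \ell(X(k),U(k)) + \hat{\lambda}(X(k)) - \hat{\lambda}(X(k+1)) = \Exp{\norm{U(k) - \RF X(k)}_{\widetilde{R}}^2} + \Exp{\norm{W(k)}_P^2}.
\end{equation*}
It therefore remains to show $\ell(\Xstatopt(k),\Ustatopt(k)) = \Exp{\norm{W(k)}_P^2}$; substituting this into the line above then produces exactly $\hat{\ell}(X(k),U(k)) = \Exp{\norm{U(k)-\RF X(k)}_{\widetilde{R}}^2}$.

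To establish the remaining equality I would apply the same pointwise identity to the stationary pair of Lemma~\ref{lem_optimalStPair}. Two simplifications occur: first, since $\Ustatopt(k) = \RF \Xstatopt(k)$, the term $\Exp{\norm{\Ustatopt(k) - \RF \Xstatopt(k)}_{\widetilde{R}}^2}$ vanishes identically; second, because $\Xstatopt(k) \sim \varrho^s$ for all $k$ by Definition~\ref{defn_StationaryPair}, the second moment $\Exp{\norm{\Xstatopt(k)}_P^2} = (\mu^s_*)^T P \mu^s_* + \mathrm{tr}(P\Sigma^s_*)$ is independent of $k$, so the telescoping $P$-terms cancel. What survives is precisely $\ell(\Xstatopt(k),\Ustatopt(k)) = \Exp{\norm{W(k)}_P^2}$, as needed.

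The argument is essentially bookkeeping once Lemma~\ref{lem_modifiedCost} is available, and I expect no serious obstacle. The two points that require care are the correct sign and direction of the telescoping difference in $\hat{\lambda}$, and the justification that the per-step identity applies to the stationary pair: this is legitimate because $\Ustatopt(k) = \RF \Xstatopt(k)$ is an $\F_k$-measurable feedback and $\Xstatopt(0)$ is Gaussian, so $(\Xstatopt(\cdot),\Ustatopt(\cdot))$ is an admissible pair to which Lemma~\ref{lem_modifiedCost}, hence its per-step form, applies. The one conceptual observation driving the whole result is that the additive noise contributes the constant $\Exp{\norm{W(k)}_P^2}$ to every stage, and that this constant is exactly the stationary stage cost.
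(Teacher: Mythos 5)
Your proposal is correct and follows essentially the same route as the paper: the paper's proof likewise extracts the per-step identity from Lemma~\ref{lem_modifiedCost} (by applying it with $N=1$ rather than by differencing consecutive horizons, a cosmetic difference) and then uses $\ell(\Xstatopt(k),\Ustatopt(k)) = \Exp{\norm{W(k)}_P^2}$ to cancel the remaining terms. Your explicit justification of that last equality via the feedback form $\Ustatopt(k)=\RF\Xstatopt(k)$ and stationarity of $\Exp{\norm{\Xstatopt(k)}_P^2}$ is a welcome detail the paper only asserts.
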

\begin{proof}
    Using Lemma \ref{lem_modifiedCost} with $N=1$ we get 
    \begin{equation*}
        \begin{split}
            \ell(X(k),U(k)) =& 
            \Exp{\norm{U(k) - \RF X(k)}_{\widetilde{R}}^2} + \Exp{\norm{X(k)}_P^2}  \\
            &- \Exp{\norm{X(k+1)}_P^2} + \Exp{\norm{W(k)}_P^2}
        \end{split}
    \end{equation*}
    and $\ell(\Xstatopt(k),\Ustatopt(k)) = \E[\norm{W(k)}_P^2]$. 
    This results in
    \begin{equation*}
        \begin{split}
            &\hat{\ell}(X(k),U(k)) = \ell(X(k),U(k)) \\
            &~-\Exp{\norm{W(k)}_P^2} - \Exp{\norm{X(k)}_P^2} + \Exp{\norm{X(k+1)}_P^2} \\
            &= \Exp{\norm{U(k) - \RF X(k)}_{\widetilde{R}}^2}
        \end{split}
    \end{equation*}
    which proves the claim.
\end{proof}
If we see the control $U(\cdot)$ as a kind of strategy $\pi(\cdot)$ of the form $U(k) = \pi(X(k))$ then Lemma \ref{lem_DissiStrategy} could already be interpreted as a dissipativity equation with respect to the strategy. 
However, since we are interested in deriving a stochastic version of $(x,u)$-dissipativity, we further modify this equation, leading to the following lemma.
For simplicity, we denote in the following the difference between a stochastic state or control process and the optimal stationary process by $\widetilde{X}(\cdot):=X(\cdot) - \Xstatopt(\cdot)$, respectively, by $\widetilde{U}(\cdot):=U(\cdot) - \Ustatopt(\cdot)$.
\begin{lemma} \label{lem_DissiMeanSquare}
    For every control $U(\cdot)$ it holds that 
    \begin{equation*}
        \begin{split}
            \bar{\ell}(X(k),U(k)) :=& \ell(X(k),U(k)) - \ell(\Xstatopt(k),\Ustatopt(k)) \\
            &+ \bar{\lambda}(k,X(k)) - \bar{\lambda}(k+1,X(k+1)) \\
            = \E\Big[\Vert X(k)&-\Xstatopt(k)\Vert_Q^2  + \Vert U(k) - \Ustatopt(k) \Vert_R^2 \Big]
        \end{split}
    \end{equation*}
    for every $k \in \N_0$. Here,  $\bar{\lambda}$ is given by 
    \begin{equation*}
        \bar{\lambda}(k,X(k)) := -\Exp{\norm{X(k)}_P^2 - \norm{X(k) - \Xstatopt(k)}_P^2}. 
    \end{equation*}
    
\end{lemma}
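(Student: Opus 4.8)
The plan is to build directly on Lemma~\ref{lem_DissiStrategy}, which already rewrites the rotated stage cost with the time-invariant storage $\hat{\lambda}(X(k)) = -\Exp{\norm{X(k)}_P^2}$ as the residual $\Exp{\norm{U(k) - \RF X(k)}_{\widetilde{R}}^2}$. First I would observe that the new storage function differs from the old one only by the telescoping correction
\[
    \bar{\lambda}(k,X(k)) = \hat{\lambda}(X(k)) + \Exp{\norm{X(k) - \Xstatopt(k)}_P^2}.
\]
Substituting this into the definition of $\bar{\ell}$ and invoking Lemma~\ref{lem_DissiStrategy} collapses the claim to the identity
\[
    \Exp{\norm{U(k) - \RF X(k)}_{\widetilde{R}}^2} + \Exp{\norm{\widetilde{X}(k)}_P^2} - \Exp{\norm{\widetilde{X}(k+1)}_P^2} = \Exp{\norm{\widetilde{X}(k)}_Q^2 + \norm{\widetilde{U}(k)}_R^2}.
\]

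The second, conceptually central, step is to note that the difference process carries \emph{no} disturbance. Since $X(\cdot)$ and $\Xstatopt(\cdot)$ obey \eqref{eq_StochSystem} with the \emph{same} realization $W(k)$ and since $\Ustatopt(k) = \RF \Xstatopt(k)$, the increments cancel the noise and leave the purely linear recursion $\widetilde{X}(k+1) = A\widetilde{X}(k) + B\widetilde{U}(k)$. In the same way, using $\Ustatopt(k) = \RF \Xstatopt(k)$ one rewrites the residual as $U(k) - \RF X(k) = \widetilde{U}(k) - \RF \widetilde{X}(k)$, so that every term in the reduced identity is expressed through $\widetilde{X}$ and $\widetilde{U}$ alone and holds for each sample path.

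The final step is the completion-of-squares identity attached to the algebraic Riccati equation \eqref{eq_algRiccati}: for the noise-free dynamics above one has, pathwise,
\[
    \norm{\widetilde{U}(k) - \RF \widetilde{X}(k)}_{\widetilde{R}}^2 + \norm{\widetilde{X}(k)}_P^2 - \norm{A\widetilde{X}(k) + B\widetilde{U}(k)}_P^2 = \norm{\widetilde{X}(k)}_Q^2 + \norm{\widetilde{U}(k)}_R^2.
\]
This follows by expanding the two quadratic forms, using $\widetilde{R}\RF = -B^TPA$ to cancel the $\widetilde{U}$-cross terms and the $B^TPB$-terms, and finally substituting $P = A^TPA + Q - A^TPB\widetilde{R}^{-1}B^TPA$ to collapse the remaining $\widetilde{X}$-quadratics to $Q$. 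Taking expectations yields the reduced identity and hence the lemma.

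I expect the genuinely routine part to be this last Riccati expansion. The only point requiring real care is the cancellation of $W(k)$ in the $\widetilde{X}$-dynamics: it is exactly what makes the identity valid \emph{for each realization} rather than merely in distribution, and is therefore the feature that ultimately underpins the pathwise nature of the dissipativity notion developed here.
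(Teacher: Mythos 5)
Your proof is correct and follows essentially the same route as the paper: both reduce the claim to the shifted variables via $U(k)-\RF X(k) = \widetilde{U}(k) - \RF\widetilde{X}(k)$ and exploit the cancellation of $W(k)$ in $\widetilde{X}(k+1) = A\widetilde{X}(k)+B\widetilde{U}(k)$, which is indeed the crux. The only (immaterial) difference is that you verify the final Riccati completion-of-squares identity directly and pathwise, whereas the paper re-applies Lemma~\ref{lem_DissiStrategy} to the pair $(\widetilde{X},\widetilde{U})$ and cancels the resulting $\Exp{\norm{W(k)}_P^2}$ term against $\ell(\Xstatopt(k),\Ustatopt(k))$ using independence.
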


\begin{proof}
    First, we observe that
    \begin{equation*}
        \begin{split}
            &\Exp{\norm{U(k) - \RF X(k)}_{\widetilde{R}}^2} \\
            &= \Exp{\norm{\left( U(k) - \Ustatopt(k) \right) - \RF \left(X(k) - \Xstatopt(k) \right)}_{\widetilde{R}}^2}
        \end{split}
    \end{equation*}
    holds with $\Ustatopt(k) = \RF \Xstatopt(k)$. Thus, we obtain $\hat{\ell}(X(k),U(k)) = \hat{\ell}(\widetilde{X}(k),\widetilde{U}(k))$ by Lemma \ref{lem_DissiStrategy}
    and therefore
    \begin{equation*}
        \begin{split}
            &\Exp{\Vert \widetilde{X}(k) \Vert_Q^2 + \Vert \widetilde{U}(k) \Vert_R^2} = \ell(\widetilde{X}(k),\widetilde{U}(k)) \\
            &= \hat{\ell}(X(k),U(k)) + \ell(\Xstatopt(k),\Ustatopt(k))\\
            &~~~ - \hat{\lambda}(\widetilde{X}(k)) + \hat{\lambda}\Big( A\widetilde{X}(k) + B \widetilde{U}(k) + W(k) \Big).
        \end{split}
    \end{equation*}
    Further, because $(X(k),U(k))$ as well as $(\Ustatopt(k),\Xstatopt(k))$ are stochastically independent of $W(k)$ we get
    \begin{equation*}
        \begin{split}
            &\hat{\lambda}\Big( A \widetilde{X}(k) + B \widetilde{U}(k) + W(k) \Big) = - \Exp{\norm{W(k)}_P^2} \\
            &~~~-\Exp{\norm{AX(k) + BU(k) - A\Xstatopt(k) - B\Ustatopt(k)}_P^2} \\ 
            &= - \Exp{\Vert \widetilde{X}(k+1) \Vert_P^2} - \ell(\Xstatopt(k),\Ustatopt(k)).
        \end{split}
    \end{equation*}
    Putting all these equations together, we finally get
    \begin{equation*}
        \begin{split}
            &\Exp{\norm{X(k)-\Xstatopt(k)}_Q^2 + \norm{U(k) - \Ustatopt(k)}_R^2} \\
            &= \hat{\ell}(X(k),U(k)) + \Exp{\norm{X(k) - \Xstatopt(k)}_P^2} \\
            &~ - \Exp{\norm{X(k+1) - \Xstatopt(k+1)}_P^2} = \bar{\ell}(X(k),U(k)).
        \end{split}
    \end{equation*}
     \begin{equation*}
         \vspace{-1.cm}
     \end{equation*}
\end{proof}

With a final modification of the stage cost we can replace the positive semidefinite matrix $Q$ in the weight of the state with a positive definite matrix. This leads to the next lemma.

\begin{lemma} \label{lem_DissiPosDef}
    There exists symmetric and positive definite matrices $S \in \R^{n \times n}$, $H \in \R^{(n+l) \times (n+l)}$, such that 
    for every control $U(\cdot)$ it holds that 
    \begin{equation*}
        \begin{split}
            \tilde{\ell}(X(k),U(k)) :=& \ell(X(k),U(k)) - \ell(\Xstatopt(k),\Ustatopt(k)) \\
            &+  \tilde{\lambda}(k,X(k)) - \tilde{\lambda}(k+1,X(k+1))\\
            =& \Exp{\norm{(X(k)-\Xstatopt(k),U(k) - \Ustatopt(k))}_H^2}
        \end{split}
    \end{equation*}
    with $\tilde{\lambda}(k,X(k)) := \bar{\lambda}(k,X(k)) + \Exp{\Vert X(k)-\Xstatopt(k) \Vert_S^2}$.
\end{lemma}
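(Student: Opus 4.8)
The plan is to split the claim into two tasks: first, an algebraic identity rewriting $\tilde{\ell}$ as the expectation of a \emph{single} quadratic form in the deviations $\widetilde{X}(k) = X(k)-\Xstatopt(k)$ and $\widetilde{U}(k) = U(k)-\Ustatopt(k)$; and second, the choice of $S$ (hence of $H$) that makes this quadratic form positive definite. Concretely, I expect to arrive at
\[
\tilde{\ell}(X(k),U(k)) = \Exp{\big\|(\widetilde{X}(k),\widetilde{U}(k))\big\|_H^2}, \qquad
H = \begin{pmatrix} Q + S - A^T S A & -A^T S B \\ -B^T S A & R - B^T S B \end{pmatrix},
\]
so that $H$ is symmetric by construction and the whole content of the lemma is the existence of a symmetric positive definite $S$ rendering $H$ positive definite.

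For the identity I would start from Lemma~\ref{lem_DissiMeanSquare}. Since $\tilde{\lambda}$ differs from $\bar{\lambda}$ only by the additive term $\Exp{\|\widetilde{X}(k)\|_S^2}$, the modified stage cost satisfies $\tilde{\ell}(X(k),U(k)) = \bar{\ell}(X(k),U(k)) + \Exp{\|\widetilde{X}(k)\|_S^2} - \Exp{\|\widetilde{X}(k+1)\|_S^2}$, and Lemma~\ref{lem_DissiMeanSquare} replaces $\bar{\ell}$ by $\Exp{\|\widetilde{X}(k)\|_Q^2 + \|\widetilde{U}(k)\|_R^2}$. The key observation is that the additive noise cancels in the deviation dynamics: subtracting the equation for $\Xstatopt$ from the one for $X$ gives $\widetilde{X}(k+1) = A\widetilde{X}(k) + B\widetilde{U}(k)$ with no $W(k)$ term, so $\Exp{\|\widetilde{X}(k+1)\|_S^2} = \Exp{\|A\widetilde{X}(k)+B\widetilde{U}(k)\|_S^2}$. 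Expanding this quadratic and collecting terms reproduces exactly the block matrix $H$ above; this part is routine.

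The substance — and the step I expect to be the main obstacle — is producing a symmetric positive definite $S$ for which $H \succ 0$, because the state weight $Q$ is only positive semidefinite and the noise cancellation makes $H$ a genuine deterministic one-step object, namely $(\widetilde{X},\widetilde{U})^T H (\widetilde{X},\widetilde{U}) = \widetilde{X}^T Q \widetilde{X} + \widetilde{U}^T R \widetilde{U} + \widetilde{X}^T S \widetilde{X} - (A\widetilde{X}+B\widetilde{U})^T S (A\widetilde{X}+B\widetilde{U})$. Thus positive definiteness of $H$ is precisely strict $(x,u)$-dissipativity of the deterministic problem at $0$ in the sense of Definition~\ref{defn_DissiDeterm} with quadratic storage $\lambda(x)=x^TSx$, and this is where detectability must enter. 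I would first invoke the standard LMI/Lyapunov characterization of detectability of $(A,Q^{1/2})$ to obtain some $S_* \succ 0$ with $Q + S_* - A^T S_* A \succ 0$. This $S_*$ need not keep $R - B^T S_* B$ positive definite, so I would then scale and set $S := \varepsilon S_*$ for small $\varepsilon>0$. Writing $D := S_* - A^T S_* A$, the top-left block becomes $Q + \varepsilon D = (1-\varepsilon)Q + \varepsilon(Q+D) \succeq \varepsilon \mu I$ for $\varepsilon\in(0,1]$, where $\mu>0$ is the smallest eigenvalue of $Q+D$; meanwhile $R - \varepsilon B^T S_* B \succ 0$ and the Schur-complement correction $\varepsilon^2 A^T S_* B\,(R-\varepsilon B^T S_* B)^{-1} B^T S_* A$ is of order $\varepsilon^2$. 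Hence for $\varepsilon$ small enough the Schur complement of $H$ is positive definite, so $H\succ 0$, and $S = \varepsilon S_*$ is symmetric positive definite as required. The delicate point to get right is exactly this order-of-$\varepsilon$ bookkeeping: the gain contributed by detectability on $\ker Q$ is only $O(\varepsilon)$, so one must check that it dominates the $O(\varepsilon^2)$ cross-term correction.
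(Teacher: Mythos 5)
Your proposal is correct and follows essentially the same route as the paper: both derive the same block matrix $H$ from the noise-free deviation dynamics $\widetilde{X}(k+1)=A\widetilde{X}(k)+B\widetilde{U}(k)$ together with Lemma~\ref{lem_DissiMeanSquare}, obtain $\tilde S\succ 0$ with $Q+\tilde S-A^T\tilde S A\succ 0$ from detectability, and then scale $S=\gamma\tilde S$ (your $\varepsilon S_*$) and use a Schur-complement argument in which the $O(\gamma)$ gain on the state block dominates the $O(\gamma^2)$ cross-term correction. The only difference is that the paper outsources the scaling and Schur-complement bookkeeping to \cite{Gruene2018}, whereas you carry it out explicitly.
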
 

\begin{proof}
    Since $(A,Q^{1/2})$ is detectable, we know by \cite[Lemma 5.4]{Gruene2018} that there is $\tilde{S} \in \R^{n \times n}$ symmetric and positive definite satisfying the matrix inequality $Q + \tilde{S} - A^T\tilde{S}A > 0$.
    For a given $\gamma \in (0,1]$, set $\widetilde{S}_{\gamma} := \gamma \widetilde{S}$ and $ Q_{\gamma} := Q + \widetilde{S}_{\gamma} - A^T \widetilde{S}_{\gamma}A$.
    Then following the calculation of \cite[Lemma 4.1]{Gruene2018} we get 
    \begin{equation*}
        \begin{split}
            &\ell(\widetilde{X}(k),\tilde{U}(k)) + \Exp{\Vert \widetilde{X}(k) \Vert_{\widetilde{S}_{\gamma}}^2} - \Exp{\Vert \widetilde{X}(k+1) \Vert_{\widetilde{S}_{\gamma}}^2} \\
            &= \bar{\ell}(X(k),U(k)) + \Exp{\Vert \widetilde{X}(k) \Vert_{\widetilde{S}_{\gamma}}^2} \\
            &\quad - \Exp{\Vert A\widetilde{X}(k)+B\widetilde{U}(k)\Vert_{\widetilde{S}_{\gamma}}^2} = \Exp{\Vert (\widetilde{X}(k),\widetilde{U}(k)) \Vert_H^2}
        \end{split}
    \end{equation*}
    with 
    \begin{equation*}
        H := \dfrac{1}{2}\left( 
        \begin{array}{cc}
            2Q_{\gamma} & \gamma E \\
             \gamma E & 2 R_{\gamma}
        \end{array}
        \right),
    \end{equation*}
    $R_{\gamma} := R-B^T\widetilde{S}_{\gamma}B$ and $E := -A^T\widetilde{S}B - B^T\widetilde{S}A$.
    Using the Schur complement we can show, that $H$ is positive definite for a sufficient small $\tilde{\gamma} \in (0,1]$, see \cite{Gruene2018}, which proves the lemma with $S = \widetilde{S}_{\tilde{\gamma}} > 0$.
\end{proof}
Note that,
\begin{equation*}
    \begin{split}
        &\tilde{\lambda}(k,X(k)) = \Exp{\norm{X(k) - \Xstatopt(k)}_{P+S}^2 - \norm{X(k)}_P^2 } \\
        =& \Exp{\norm{X(k)}_S^2 - 2X(k)^T(P+S)\Xstatopt(k) + \norm{\Xstatopt(k)}_{P+S}^2}
    \end{split}
\end{equation*}
is bounded from below since $S>0$ and $P \geq 0$ and thus, Lemma \ref{lem_DissiPosDef} delivers a dissipativity equation equivalently to Definition \ref{defn_DissiDeterm}. 
Therefore, we can show a turnpike property for the stochastic case analogous to Lemma \ref{lem_TurnpikeDeterm}. 

\begin{thm} \label{thm_StochTurnpike}
   For each $X_0 \sim \Normal(\mu_0,\Sigma_0)$, there exists a constant $C \in \R$ such that for each $\delta > 0$, each control process $U(\cdot)$ satisfying $J_N(X_0,U) \leq \delta + \sum_{k=0}^{N-1} \ell(\Xstatopt(k),\Ustatopt(k))$ and each $\eps > 0$ the value
    \begin{equation*}
        Q_{\eps} := \# \Big\lbrace k \in \lbrace 0, \ldots, N-1 \rbrace \mid\E \big[ \Vert(\widetilde{X}(k),\widetilde{U}(k))\Vert_H^2 \big] \leq \eps \Big\rbrace
    \end{equation*}
    satisfies the inequality $Q_{\eps} \geq N-(\delta+C)/\eps$ for all $N \in \N$.
\end{thm}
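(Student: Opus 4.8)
The plan is to mirror the deterministic argument behind Lemma~\ref{lem_TurnpikeDeterm}, now using the time-varying dissipativity \emph{equation} established in Lemma~\ref{lem_DissiPosDef} in place of the inequality~\eqref{eq:sDI}. The role of the comparison function $\rho$ is played directly by the identity map, since Lemma~\ref{lem_DissiPosDef} equates the rotated stage cost $\tilde{\ell}(X(k),U(k))$ with the nonnegative quantity $\Exp{\norm{(\widetilde{X}(k),\widetilde{U}(k))}_H^2}$, which is exactly the object counted by $Q_\eps$. First I would sum the equation of Lemma~\ref{lem_DissiPosDef} over $k=0,\ldots,N-1$. The storage terms telescope, and using $\sum_{k=0}^{N-1}\ell(X(k),U(k)) = J_N(X_0,U)$ this leaves
\begin{equation*}
    \sum_{k=0}^{N-1}\Exp{\norm{(\widetilde{X}(k),\widetilde{U}(k))}_H^2} = J_N(X_0,U) - \sum_{k=0}^{N-1}\ell(\Xstatopt(k),\Ustatopt(k)) + \tilde{\lambda}(0,X(0)) - \tilde{\lambda}(N,X(N)).
\end{equation*}

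Next I would bound the right-hand side. By the standing hypothesis on $U(\cdot)$ we have $J_N(X_0,U) - \sum_{k=0}^{N-1}\ell(\Xstatopt(k),\Ustatopt(k)) \leq \delta$. The initial term $\tilde{\lambda}(0,X(0)) = \tilde{\lambda}(0,X_0)$ is a fixed number determined solely by the distributions of $X_0$ and of $\Xstatopt(0)$; in particular it depends neither on $N$ nor on the chosen control. It therefore remains to absorb the terminal term $-\tilde{\lambda}(N,X(N))$ into a constant, for which I need a lower bound on $\tilde{\lambda}(N,\cdot)$ that holds uniformly in $N$ and over every admissible terminal state.

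Obtaining this uniform lower bound is the crux of the argument and the main point where the stochastic setting departs from the deterministic one: here $\tilde{\lambda}$ is time-varying, so its boundedness from below must be controlled uniformly in $k$. Completing the square in $X(k)$ in the representation
\begin{equation*}
    \tilde{\lambda}(k,X(k)) = \Exp{\norm{X(k)-\Xstatopt(k)}_{P+S}^2 - \norm{X(k)}_P^2}
\end{equation*}
isolates a nonnegative term together with a remainder that depends only on $\Xstatopt(k)$ and is bounded below by $-c\,\Exp{\norm{\Xstatopt(k)}^2}$ for a constant $c$ determined by $P$ and $S$. The decisive observation is that, because $\Xstatopt(k)\sim\Normal(0,\Sigma^s_*)$ is stationary, $\Exp{\norm{\Xstatopt(k)}^2} = \mathrm{Tr}(\Sigma^s_*)$ is \emph{constant} in $k$; hence $\tilde{\lambda}(k,X(k)) \geq \tilde{\lambda}_{\min} := -c\,\mathrm{Tr}(\Sigma^s_*)$ uniformly in $k$ and over all states. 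Setting $C := \tilde{\lambda}(0,X_0) - \tilde{\lambda}_{\min}$, which depends only on $X_0$, the displayed identity yields $\sum_{k=0}^{N-1}\Exp{\norm{(\widetilde{X}(k),\widetilde{U}(k))}_H^2} \leq \delta + C$.

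Finally I would run the standard counting argument. Writing $\mathcal{B}$ for the complement in $\lbrace 0,\ldots,N-1\rbrace$ of the index set counted by $Q_\eps$, every $k\in\mathcal{B}$ contributes strictly more than $\eps$ to the left-hand sum, so $\eps\cdot\#\mathcal{B} \leq \delta + C$, i.e.\ $\#\mathcal{B} \leq (\delta+C)/\eps$. Since $Q_\eps = N - \#\mathcal{B}$, this gives $Q_\eps \geq N - (\delta+C)/\eps$, as claimed. I expect the only genuine obstacle to be the uniform-in-time lower bound on $\tilde{\lambda}$ discussed above; once the dissipativity equation of Lemma~\ref{lem_DissiPosDef} and this bound are in hand, the telescoping and counting steps are routine.
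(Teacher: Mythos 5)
Your proposal is correct and follows essentially the same route as the paper: sum the dissipativity equation of Lemma~\ref{lem_DissiPosDef}, telescope the storage terms, set $C := \tilde{\lambda}(0,X_0) - M$ for a uniform lower bound $M$ on $\tilde{\lambda}$, and count the exceptional indices. The only cosmetic differences are that the paper phrases the counting step as a contradiction and handles the uniform-in-$k$ lower bound on $\tilde{\lambda}$ (via positive definiteness of $S$ and stationarity of $\Xstatopt$) in the remark immediately preceding the theorem rather than inside the proof, where you spell it out via completing the square.
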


\begin{proof}
    The proof follows the same arguments as \cite[Theorem 5.3]{Gruene2013}.
    Set $C := \tilde{\lambda}(0,X_0) - M$ where $M \in \R$ is a lower bound on $\tilde{\lambda}$. Then for $\tilde{J}_N(X_0,U) := \sum_{k=0}^{N-1} \tilde{\ell}(X(k),U(k))$ we get 
    \begin{equation} \label{eq_turnpikeContr}
        \begin{split}
            \tilde{J}_N(X_0,U) =& J_N(X_0,U) - \sum_{k=0}^{N-1} \ell(\Xstatopt(k),\Ustatopt(k)) \\
            &+ \tilde{\lambda}(0,X(0)) - \tilde{\lambda}(N,X(N)) \leq \delta + C.
        \end{split}
    \end{equation}
    Now assume that $Q_{\eps} < N-(\delta+C)/\eps$. This means there is a set $\mathcal{\Bar{M}} \subset \lbrace 0,\ldots,N-1 \rbrace$ of $N-Q_{\eps} > (\delta+C)/\eps$ time instants such that $\E [ \Vert (\widetilde{X}(k),\widetilde{U}(k)) \Vert_H^2 ] \geq \eps$ for all $k \in \mathcal{\Bar{M}}$. Using Lemma \ref{lem_DissiPosDef}, this implies $\tilde{J}_N(X_0,U) = \sum_{k=0}^{N-1} \E\big[\Vert (\widetilde{X}(k),\widetilde{U}(k)) \Vert_H^2\big] > \delta + C$, which contradicts \eqref{eq_turnpikeContr} and, thus, proves the theorem.
\end{proof}

It is not immediately obvious what Theorem \ref{thm_StochTurnpike} means for the single realization paths of $X(\cdot)$ and $U(\cdot)$. What we cannot conclude is an almost sure turnpike property of the paths. However, what we can conclude is a turnpike property in probability. This means the probability that a realization is not near $(\Xstatopt(\cdot), \Ustatopt(\cdot))$ is small in an appropriate probabilistic sense, except for a number of time instances independent of $N$. The following theorem formalizes this result. 
\begin{thm} \label{thm_TurnpikeProb}
    For each $X_0 \sim \Normal(\mu_0,\Sigma_0)$, there exists a constant $C \in \R$ such that for each $\delta > 0$, each control process $U(\cdot)$ satisfying $J_N(X_0,U) \leq \delta + \sum_{k=0}^{N-1} \ell(\Xstatopt(k),\Ustatopt(k))$ and each $\epsilon, \eta > 0$ the value
    \begin{equation*}
        P_{\epsilon, \eta} := \# \Big\lbrace k \in \mathcal{M}_N \mid\Prob\Big( \Vert (\widetilde{X}(k),\widetilde{U}(k)) \Vert_H^2 \geq \epsilon \Big)\leq \eta \Big\rbrace
    \end{equation*}
    satisfies $P_{\epsilon,\eta} \geq N-(\delta+C)/(\epsilon \eta)$ for all $N \in \N$, where $\mathcal{M}_N := \lbrace 0, \ldots, N-1 \rbrace$.
\end{thm}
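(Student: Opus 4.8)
The plan is to obtain the probabilistic turnpike statement directly from the mean-square turnpike result of Theorem \ref{thm_StochTurnpike} by way of Markov's inequality. The crucial observation is that for each fixed $k$ the quantity $\Vert(\widetilde{X}(k),\widetilde{U}(k))\Vert_H^2$ is a nonnegative random variable, so Markov's inequality gives
\begin{equation*}
    \Prob\Big(\Vert(\widetilde{X}(k),\widetilde{U}(k))\Vert_H^2 \geq \epsilon\Big) \leq \frac{\E\big[\Vert(\widetilde{X}(k),\widetilde{U}(k))\Vert_H^2\big]}{\epsilon}
\end{equation*}
for every $\epsilon > 0$. This links the probabilistic distance measure of Theorem \ref{thm_TurnpikeProb} to the expected distance measure already controlled by Theorem \ref{thm_StochTurnpike}.

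First I would take $C \in \R$ to be exactly the constant furnished by Theorem \ref{thm_StochTurnpike}, and I would fix $\delta > 0$ together with an admissible control $U(\cdot)$ satisfying $J_N(X_0,U) \leq \delta + \sum_{k=0}^{N-1}\ell(\Xstatopt(k),\Ustatopt(k))$. For given $\epsilon,\eta > 0$ I would then apply Theorem \ref{thm_StochTurnpike} with the specific threshold $\eps := \epsilon\eta$, which shows that the index set
\begin{equation*}
    \mathcal{A} := \Big\lbrace k \in \mathcal{M}_N \mid \E\big[\Vert(\widetilde{X}(k),\widetilde{U}(k))\Vert_H^2\big] \leq \epsilon\eta \Big\rbrace
\end{equation*}
has at least $N-(\delta+C)/(\epsilon\eta)$ elements.

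The second step is to verify the inclusion of $\mathcal{A}$ into the index set counted by $P_{\epsilon,\eta}$. Indeed, if $k \in \mathcal{A}$, then inserting $\E[\Vert(\widetilde{X}(k),\widetilde{U}(k))\Vert_H^2] \leq \epsilon\eta$ into the Markov bound above yields $\Prob(\Vert(\widetilde{X}(k),\widetilde{U}(k))\Vert_H^2 \geq \epsilon) \leq \epsilon\eta/\epsilon = \eta$, so $k$ satisfies the defining condition of $P_{\epsilon,\eta}$. Consequently $P_{\epsilon,\eta} \geq \#\mathcal{A} \geq N-(\delta+C)/(\epsilon\eta)$, which is precisely the assertion.

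Since the argument amounts to one application of Markov's inequality combined with invoking the already established Theorem \ref{thm_StochTurnpike}, I do not expect any genuine obstacle. The only point demanding care is the bookkeeping of the threshold: one must choose $\eps = \epsilon\eta$ in Theorem \ref{thm_StochTurnpike} so that the factor $1/\epsilon$ produced by Markov's inequality and the factor $1/\eta$ implicit in the expectation bound combine correctly to give the denominator $\epsilon\eta$ in the final estimate.
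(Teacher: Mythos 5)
Your proposal is correct and matches the paper's own proof essentially verbatim: both apply Markov's inequality to bound the pathwise exceedance probability by $\tfrac{1}{\epsilon}\E\big[\Vert(\widetilde{X}(k),\widetilde{U}(k))\Vert_H^2\big]$ and then invoke Theorem \ref{thm_StochTurnpike} with threshold $\epsilon\eta$ to count the favorable time instants. No discrepancies to report.
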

\begin{proof}
    Using the Markov inequality, we get 
    \begin{equation} \label{eq_MarkovIneq}
        \Prob\big(\Vert (\widetilde{X}(k),\widetilde{U}(k)) \Vert_H^2 \geq \epsilon \big) \leq \dfrac{1}{\epsilon}\E\big[\Vert (\widetilde{X}(k),\widetilde{U}(k)) \Vert_H^2 \big].
    \end{equation}
    Further, by Theorem \ref{thm_StochTurnpike}, we know that there are at least $N-(\delta+C)/(\epsilon \eta)$ time instants for which $\E [ \Vert (\widetilde{X}(k),\widetilde{U}(k)) \Vert_H^2 ] \leq \epsilon \eta$.
    Using equation \eqref{eq_MarkovIneq}, this gives $\Prob\left(\Vert (\widetilde{X}(k),\widetilde{U}(k)) \Vert_H^2 \geq \epsilon \right) \leq \eta$ for all these time instants and, thus, proves the claim.
\end{proof}

Note that in contrast to the classical turnpike approaches, we have to fix two constants in Theorem \ref{thm_TurnpikeProb} where $\epsilon$ defines how close we want to be to the realization of the stationary pair and $\eta$ defines the desired probability bound for this.

\section{NUMERICAL EXAMPLE} \label{sec_example}
Consider the one-dimensional stochastic OCP 
\begin{equation} \label{eq_exampleOCP}
    \begin{split}
        \min_{U(\cdot)} \sum_{k=0}^{N-1} &\Exp{X(k)^2 + 5U(k)^2} \\
        \mbox{s.t.}~X(k+1) &= 1.2X(k) + U(k) + W(k)
    \end{split}
\end{equation}
with initial condition $X(0) = X_0 \sim \Normal(3,1.5)$ and $W(k) \sim \Normal(0,10)$ for all $k=0,\ldots,N-1$. Then the corresponding stationary pair from Lemma \ref{lem_optimalStPair} is defined by the feedback $K \approx -0.558$ and the initial distribution $\Xstatopt(0) \sim \Normal(0,17)$. 
To observe the pathwise turnpike property from Theorem \ref{thm_TurnpikeProb}, we fix a realization of $W(\cdot)$ given by $w = \lbrace w_k \rbrace$ and simulate the states and controls according to the equation \eqref{eq_exampleOCP} and the optimal stationary pair with $W(k) = w_k$ and random initial values following the desired distributions.
Figure \ref{fig_pathwiseTurnpike} shows the noise-sequence $w$ and the resulting turnpike behavior of the realization. 
Further, Table~\ref{table:1} shows the number of points $P^{w}_{0.25} := \# \lbrace k \in \mathcal{M}_N \mid \Vert (\widetilde{X}(k),\widetilde{U}(k)) \Vert_H^2 \geq 0.25 \rbrace$ outside the $\eps=0.25$ neighborhood of the stationary pair for the paths from Figure~\ref{fig_pathwiseTurnpike} for the fixed realization $w$ of the noise. It can be observed that the value $P^{w}_{0.25}$ is uniformly bounded by $12$, which further illustrates the pathwise turnpike property of this realization.

\begin{table}[ht]
\vspace{-0.05cm}
    \centering
    \begin{tabular}{ |c||c|c|c|c|c|c|c|c|c|c|c| } 
        \hline
        $N$ & 10 & 15 & 20 & 25 & 30 & 35 & 40 & 45 & 50 \\
        \hline
        $P^{w}_{0.25}$ & 10 & 9 & 11 & 12 & 9 & 10 & 12 & 10 & 12 \\
        \hline
    \end{tabular}
    \caption{Number of points $P^{w}_{0.25}$ for different horizons $N$.} \label{table:1}
    \vspace{-0.95cm}
\end{table}

\begin{figure}[ht] 
    \centering
    \includegraphics[width=8.3cm,height=4.2cm]{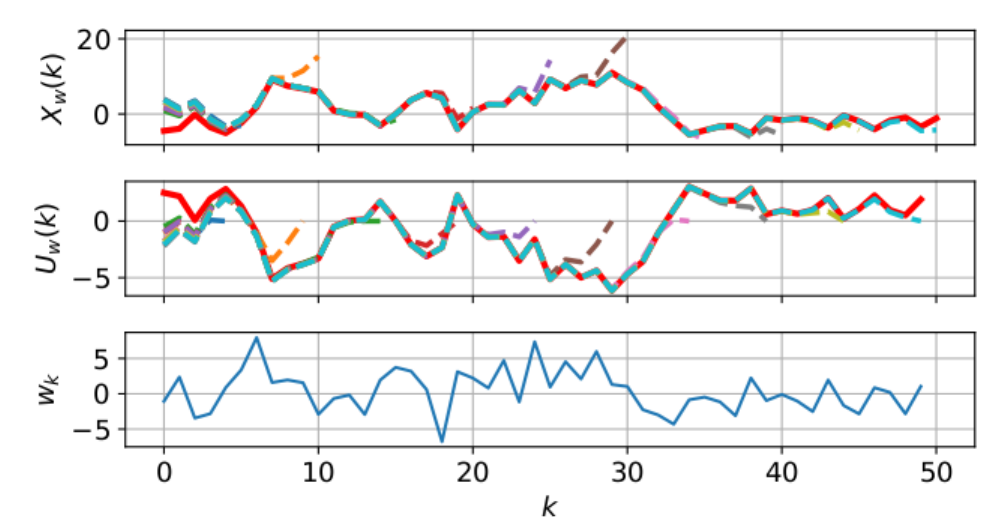}
    \caption{Fixed realization $w$ of the noise and corresponding realizations of the optimal solutions $(X_N^*(\cdot),U_N^*(\cdot))$ from \eqref{eq_exampleOCP} on different horizons $N$ (dashed) and the optimal stationary pair $(\Xstatopt(\cdot),\Ustatopt(\cdot))$ (solid red).}
    \label{fig_pathwiseTurnpike}
    \vspace{-0.5cm}
\end{figure}

\section{SUMMARY} \label{sec_summary}
This paper presented pathwise turnpike results in probability based on strict dissipativity for discrete-time stochastic linear-quadratic optimal control problems. Compared to the deterministic setting, a stationary stochastic process replaces the optimal steady state. We have illustrated the theoretical results considering a one-dimensional example.

{\bibliographystyle{abbrv} 
  \bibliography{turnpike_stochastic_LQR} 
}

\end{document}